\newcommand{\plans}{\mathcal{A}_{3,k} }
\newcommand{\euc}{\mathbb{E}^{3}}
\newcommand{\solida}{\Omega}
\newtheorem{thm}{Theorem}[section]
\newtheorem{lem}[thm]{Lemma}
\newtheorem{prop}[thm]{Proposition}
\theoremstyle{definition}
\theoremstyle{remark}
\newtheorem{rem}[thm]{Remark}
\numberwithin{equation}{section}
\begin{document}

\title[On the solid angle of a convex set] {On the solid angle of  convex sets}

\author[J. Bruna, J.\ Cuf\'{\i}, E.\ Gallego and A.\ Revent\'os]{J. Bruna, J. Cuf\'{\i}, E. Gallego and A. Revent\'os}

\address{%
Departament de Matem\`atiques \\
Universitat Aut\`{o}noma de Barcelona\\\\\
08193 Bellaterra, Barcelona\\
Catalonia}
\email{joaquim.bruna@uab.cat, julia.cufi@uab.cat, eduardo.gallego@uab.cat, agusti.reventos@uab.cat}

\thanks{The authors were partially supported by grants 2021SGR01015 (Generalitat de Catalunya) and  PID2021-125625NB-I00. ( Ministerio de Ciencia e Innovaci\'on).}

\subjclass{Primary 52A15, Secondary 53C65.}
\keywords{Invariant measures, convex set, dihedral angle, solid angle, constant width.}

\begin{abstract} 
Here we analyze three dimensional analogues  of the classical Crofton's formula for planar compact convex sets. In this formula a fundamental role is played by the visual angle of the convex set from an exterior point. A generalization of the visual angle to convex sets in euclidian space is the visual solid angle. This solid angle, being an spherically convex set in the unit sphere,  has length, area and other geometric quantities to be considered. The main goal of this note is  to express invariant quantities of the original convex set depending on  volume,  surface area and  mean curvature integral by means of integrals of  functions related to the solid angle.
\end{abstract}

\maketitle

\section{Introduction and statement of results} 

The purpose of this note is to give three dimensional analogues of the classical Crofton's formula for a planar compact convex set $K$,
\begin{equation}\label{CCP}
\int_{P\notin K} 2(\omega-\sin\omega)\, dP=L^2-2\pi F.
\end{equation}
Here $L$ is the length of the boundary of $K$, $F$ its area and $\omega=\omega(P)$ is the \emph{visual angle} of $K$ as seen from $P$.

A generalization of the visual angle in the plane  to compact convex sets in the euclidean space  $\euc$ is the {\em visual solid} angle. The solid angle $\Omega(P)$ of a compact convex set $K$ from a point $P\notin K$ is the set of unit directions $u$ such that the ray $P+tu,t\geq 0$ meets $K$. Instead of an arc in the unit cercle $S^1$ and its length $\omega$ we have a (spherically convex) set in the unit sphere $S^2$ with a richer geometry, having area, length, and other geometric quantities that might be considered. 

In our analysis we will be lead to two set functions $\alpha(\Omega), \beta(\Omega)$ defined for  spherically convex sets $\Omega\subset S^2$, both replacing $\omega-\sin\omega$ in different senses. To introduce them we first recall classical notions about spherical geometry, the main reference being \cite{santalo}. The radii perpendicular to the support planes of the cone spanned by $\Omega$ form another cone whose intersection with $S^2$ is the so-called \emph{dual curve} of $\partial \Omega$. The region in $S^2$ bounded by this curve and its symmetrical with respect the origin, which we denote by $\tilde{\Omega}$, consists in the unit directions $v$ such that the plane $v^{\bot}$ meets $\Omega$. We may identify it with the set of great circles in $S^2$ meeting $\Omega$. Its complement $\tilde{\Omega}^c$ in $S^2$ consists of two symmetrical components. Since the scalar product  $\langle v,u\rangle$ does not vanish for $v\in \tilde{\Omega}^c, u\in \Omega$, this quantity has constant sign in each component. The one with positive sign is called the \emph{dual solid cone}, denoted $\Omega^{*}$.

The set functions $\alpha, \beta$ are respectively defined as  

\begin{equation*}\label{alpha}
\alpha(\Omega)=\frac 12\int\limits_{u\in\Omega,\, v\in \tilde{\Omega}}|\langle u, v\rangle|\, du\, dv,
\end{equation*}

\begin{equation}\label{beta}
\beta(\Omega)=\frac 18
\int\limits_{v_i\in \tilde{\Omega}}|\det(v_{1},v_{2},v_{3})|\,dv_{1}\,dv_{2}\,dv_{3},
\end{equation}
where $du$ denotes the Lebesgue measure in $S^2$.

The factors in front of the integrals are explained by the fact that the set $\tilde{\Omega}$ doubly parametrizes planes through the origin meeting $\Omega$, equivalently great circles meeting $\Omega$, so the $v$-integrals are in fact over this set of planes, see below.

It is immediate to check that both set functions are invariant by rigid motions $T$ of the sphere, that is $ \alpha (T(\Omega))=\alpha(\Omega), \beta (T(\Omega))=\beta(\Omega)$.  We point out that a straightforward computation shows  that the formal analogues in the plane
$$
\alpha(I)=\frac 12\int\limits_{u\in I,\, v\in \tilde{I}}|\langle u, v\rangle|\, du\, dv,\quad
\beta(I)=\frac 14
\int\limits_{v_i\in \tilde{I}}|\det(v_{1},v_{2})|\,dv_{1}\,dv_{2},$$
where now $I$ is an interval, both equal $\omega-\sin\omega$ up to a constant, $\omega$ being the length of $I$. 

\medskip
The Crofton type formulas we obtain are then:

\begin{thm}\label{thmalpha}
 For  a compact convex set  $K\subset \euc$ with  mean curvature integral $M$, volume $V$ and surface area $F$ one has
\begin{equation}\label{pairs} 
\frac12 \pi MF-2\pi^2 V=\int_{P\notin K} \alpha(\Omega(P))\,dP.
\end{equation}
\end{thm}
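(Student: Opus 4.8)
The plan is to unfold the definition of $\alpha$, turn the integral over triples $(P,u,v)$ into an integral over $\partial K\times S^{2}\times S^{2}$, evaluate the resulting kernel by an elementary spherical computation, and close with the divergence theorem. First I would reinterpret the two constraints hidden in $\alpha(\Omega(P))$: by definition $u\in\Omega(P)$ means the ray $\{P+tu:t\ge 0\}$ meets $K$, and I claim $v\in\tilde{\Omega}(P)$ is equivalent to the affine plane $P+v^{\bot}$ meeting $K$ (the great circle $v^{\bot}\cap S^{2}$ meets $\Omega(P)$ exactly when some unit $u\perp v$ has $P+tu\in K$ for $t>0$, and that point lies on $P+v^{\bot}$; conversely any point of $(P+v^{\bot})\cap K$ gives such a $u$). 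Since all integrands are nonnegative, Tonelli applies throughout and
\[
\int_{P\notin K}\alpha(\Omega(P))\,dP=\frac12\int_{P\notin K}\int_{u\in S^{2}}\int_{v\in S^{2}}\mathbf{1}\bigl[\{P+tu:t\ge0\}\cap K\neq\emptyset\bigr]\,\mathbf{1}\bigl[(P+v^{\bot})\cap K\neq\emptyset\bigr]\,|\langle u,v\rangle|\,dv\,du\,dP.
\]
Then I would pass to ray coordinates: for a pair $(P,u)$ whose ray meets $K$, write $P=Q-t_{0}u$ with $Q\in\partial K$ the first hitting point, $t_{0}>0$ the distance, and $\langle u,n(Q)\rangle<0$ ($n(Q)$ the outer unit normal). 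By convexity of $K$ this is a bijection off a null set, and the standard ray change of variables gives $dP\,du=|\langle n(Q),u\rangle|\,dt_{0}\,dS(Q)\,du$.

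The crux is the inner integral in $(t_{0},v)$ for fixed $Q\in\partial K$ and $u$. The plane $Q-t_{0}u+v^{\bot}$ meets $K$ iff $\langle Q-t_{0}u,v\rangle$ lies between $\min_{K}\langle\cdot,v\rangle$ and $\max_{K}\langle\cdot,v\rangle$; since $Q\in K$ the admissible set of $t_{0}>0$ is an interval anchored at $0$, and a short computation shows its length times $|\langle u,v\rangle|$ equals $h_{K-Q}(-v)$ when $\langle u,v\rangle>0$ and $h_{K-Q}(v)$ when $\langle u,v\rangle<0$, where $h_{K-Q}$ is the support function of the translate $K-Q$ (nonnegative, as $0\in K-Q$). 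After $v\mapsto-v$ in the first piece this collapses to $2\int_{\langle u,v\rangle<0}h_{K-Q}(v)\,dv$; the essential phenomenon is that the weight $|\langle u,v\rangle|$ is exactly absorbed by the length of the $t_{0}$-interval. Substituting this back and exchanging the $u$- and $v$-integrations reduces the whole quantity to
\[
\int_{\partial K}\int_{v\in S^{2}}h_{K-Q}(v)\,G(n(Q),v)\,dv\,dS(Q),\qquad G(n,v)=\int_{\langle u,n\rangle<0,\ \langle u,v\rangle<0}|\langle u,n\rangle|\,du.
\]

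By rotational invariance $G(n,v)$ depends only on $\gamma=\angle(n,v)$, and I expect this elementary but somewhat fiddly evaluation to be the main obstacle. In spherical coordinates about $n$ the $\phi$-integration is explicit, and the remaining $\theta$-integral, after the substitution $s=-\cot\gamma\cot\theta$, becomes $\int_{0}^{1}\frac{s\tan^{2}\gamma}{(1+s^{2}\tan^{2}\gamma)^{2}}\arccos s\,ds$, which one evaluates by parts using $\int_{0}^{\pi/2}(1+\tan^{2}\gamma\,\sin^{2}\psi)^{-1}\,d\psi=\tfrac{\pi}{2}\cos\gamma$; the outcome is the clean formula $G(n,v)=\tfrac{\pi}{2}(1+\langle n,v\rangle)$, which one can sanity-check at $\gamma=0,\tfrac{\pi}{2},\pi$ and against the identity $G(n,v)+G(n,-v)=\int_{\langle u,n\rangle<0}|\langle u,n\rangle|\,du=\pi$.

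Finally, writing $h_{K-Q}(v)=h_{K}(v)-\langle Q,v\rangle$ and using $\int_{S^{2}}v\,dv=0$, $\int_{S^{2}}v_{i}v_{j}\,dv=\tfrac{4\pi}{3}\delta_{ij}$ together with the classical identity $\int_{S^{2}}h_{K}(v)\,dv=M$, the constant part of $G$ contributes $M$ (independent of $Q$) and the $\langle n,v\rangle$ part contributes $\langle n(Q),\,c_{0}-\tfrac{4\pi}{3}Q\rangle$ with $c_{0}=\int_{S^{2}}h_{K}(v)v\,dv$ a fixed vector, so the quantity equals $\tfrac{\pi}{2}\int_{\partial K}\bigl(M+\langle n(Q),c_{0}-\tfrac{4\pi}{3}Q\rangle\bigr)\,dS(Q)$. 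The divergence theorem now closes the argument: $\int_{\partial K}n(Q)\,dS(Q)=0$ annihilates the $c_{0}$-term, $\int_{\partial K}\langle Q,n(Q)\rangle\,dS(Q)=\int_{K}\operatorname{div}(x)\,dx=3V$, and $\int_{\partial K}M\,dS=MF$, which yields $\tfrac{\pi}{2}\bigl(MF-4\pi V\bigr)=\tfrac12\pi MF-2\pi^{2}V$, as claimed.
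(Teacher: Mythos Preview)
Your argument is correct, but it follows a genuinely different route from the paper's. The paper's proof is pure integral geometry: it starts from the Crofton identities $\int_{G\cap K\neq\emptyset}dG=\tfrac{\pi}{2}F$ and $\int_{E\cap K\neq\emptyset}dE=M$, multiplies them, and disintegrates the product measure via the decomposition $dG\,dE=|\langle u,v\rangle|\,dG_{P}\,dE_{P}\,dP$; splitting the $P$-integral into $P\in K$ (which contributes the constant $2\pi^{2}$ per point) and $P\notin K$ (which is $\alpha(\Omega(P))$ by definition) finishes in a few lines. Your approach instead unfolds $\alpha$, passes to first-hit coordinates on $\partial K$, evaluates the $t_{0}$-integral in terms of the support function, isolates the kernel $G(n,v)$, and closes with the divergence theorem and the identity $\int_{S^{2}}h_{K}=M$. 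The paper's argument is shorter and makes the kinematic origin transparent, while yours is a self-contained calculus computation that avoids invariant measures on affine Grassmannians and yields the explicit kernel $G(n,v)=\tfrac{\pi}{2}(1+\langle n,v\rangle)$ as a byproduct. One remark: the evaluation of $G$ is actually a two-line affair rather than the fiddly computation you anticipate---write $\max(0,-\langle u,n\rangle)=\tfrac12(|\langle u,n\rangle|-\langle u,n\rangle)$, use the symmetry $u\mapsto-u$ to get $\int_{\langle u,v\rangle<0}|\langle u,n\rangle|\,du=\pi$, and use $\int_{\langle u,v\rangle<0}u\,du=-\pi v$ for the linear piece.
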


\begin{thm}\label{thmbeta}
For a compact convex set $K$  in $\euc$ with  mean curvature integral $M$ and volume $V$ one has
\begin{equation}\label{triples}
M^{3}-\pi^{4}V=\int_{P\notin K}\beta(\Omega(P))\, dP.
\end{equation}
\end{thm}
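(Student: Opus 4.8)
\emph{Plan of proof.} The strategy is the three dimensional counterpart of the integral geometric proof of \eqref{CCP}: I would read the triple integral in \eqref{beta} as an integral over triples of \emph{planes} of $\euc$, and use the classical fact (see \cite{santalo}) that $M$ is precisely the invariant measure of the set of planes meeting $K$. The identity \eqref{triples} then follows from a dichotomy according to whether three planes meeting $K$ intersect inside or outside $K$.

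First I would record the dictionary between $\tilde\Omega(P)$ and planes through $P$. By the description of $\tilde\Omega$ recalled above, a unit vector $v$ lies in $\tilde\Omega(P)$ precisely when the great circle $v^{\bot}\cap S^2$ meets $\Omega(P)$, i.e.\ when the plane through $P$ orthogonal to $v$ meets $K$. It is convenient to parametrize \emph{oriented} planes by $(v,p)\in S^2\times\R$, the plane being $\{X:\langle X,v\rangle=p\}$, with invariant measure $dv\,dp$; since such a plane meets $K$ iff $-h_K(-v)\le p\le h_K(v)$ and $\int_{S^2}h_K(v)\,dv=M$ (see \cite{santalo}), the measure of oriented planes meeting $K$ equals $2M$. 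The second ingredient is the change of variables for triples of planes: three oriented planes $(v_i,p_i)$, $i=1,2,3$, with linearly independent normals meet at the single point $P$ with $\langle P,v_i\rangle=p_i$, and because $P\mapsto(p_1,p_2,p_3)$ is linear with determinant $\det(v_1,v_2,v_3)$, on the set of such triples one has $dv_1\,dp_1\,dv_2\,dp_2\,dv_3\,dp_3=|\det(v_1,v_2,v_3)|\,dP\,dv_1\,dv_2\,dv_3$.

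Now I would compute $(2M)^3$, the measure of ordered triples of oriented planes each meeting $K$, by splitting according to the location of the common intersection point $P$ of the triple (which exists and is unique for almost every triple, and lies a.e.\ either inside or outside $K$). If $P\in K$, then each plane of the triple automatically meets $K$ at $P$, so the contribution of these triples is $\int_{P\in K}\big(\int_{(S^2)^3}|\det(v_1,v_2,v_3)|\,dv_1\,dv_2\,dv_3\big)\,dP$; a direct calculation (integrate in $v_1$ using $\int_{S^2}|\langle v,n\rangle|\,dv=2\pi$ for unit $n$, then in $v_2,v_3$) gives $\int_{(S^2)^3}|\det(v_1,v_2,v_3)|\,dv_1\,dv_2\,dv_3=8\pi^4$, so this contribution is $8\pi^4 V$. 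If $P\notin K$, the requirement that all three planes meet $K$ becomes $v_i\in\tilde\Omega(P)$, so the contribution is $\int_{P\notin K}\big(\int_{(\tilde\Omega(P))^3}|\det(v_1,v_2,v_3)|\,dv_1\,dv_2\,dv_3\big)\,dP=8\int_{P\notin K}\beta(\Omega(P))\,dP$ by \eqref{beta}. Adding the two contributions, $8M^3=8\pi^4V+8\int_{P\notin K}\beta(\Omega(P))\,dP$, which is \eqref{triples}.

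I expect the real difficulty to be bookkeeping rather than ideas: fixing the normalization of the invariant plane measure so that planes meeting $K$ have measure exactly $M$ (this is what makes the factor $\tfrac18$ in \eqref{beta} the correct one), evaluating $\int_{(S^2)^3}|\det(v_1,v_2,v_3)|\,dv_1\,dv_2\,dv_3$ cleanly, and verifying that the degenerate configurations — triples with $\det(v_1,v_2,v_3)=0$, or whose common point lies on $\partial K$ — form a null set, so that the inside/outside dichotomy genuinely partitions almost every triple. Finiteness of $\int_{P\notin K}\beta(\Omega(P))\,dP$ then comes for free from the finiteness of $M$.
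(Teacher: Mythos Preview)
Your proposal is correct and follows essentially the same route as the paper: compute the invariant measure of triples of planes meeting $K$ via the change of variables \eqref{de1de2de3}, split over $P\in K$ versus $P\notin K$, and evaluate the inner integral over $(S^2)^3$ to get the $\pi^4 V$ term. The only cosmetic difference is that you work with \emph{oriented} planes (so everything carries an extra factor $8$ that cancels), whereas the paper uses the unoriented measure $dE_P=\tfrac12\,dv$ from Lemma~\ref{lema1} directly; your added remarks on null sets and finiteness are welcome hygiene that the paper leaves implicit.
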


\begin{thm}\label{slices}
With the same notations,
\begin{equation}\label{LL}
\int L(K\cap E)^2\, dE=\int_{P\notin K}|\Omega(P)|^2\, dP+4\pi^2 V,
\end{equation}
where $dE$ is the invariant measure for affine planes $E$ in space, $L$ denotes length and $|\Omega|$ surface measure.\footnote{Formula \eqref{LL} was previously considered in private conversations with  E. Teufel in 2003.}
\end{thm}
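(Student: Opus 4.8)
The plan is to reduce \eqref{LL} to the classical planar Crofton formula \eqref{CCP}, applied inside each affine plane $E$, combined with a Blaschke--Petkantschin type change of variables on $S^{2}$. Throughout I will use the normalization of $dE$ for which the two standard facts (\cite{santalo}) hold: the slicing identity $\int F(K\cap E)\,dE=2\pi V$, which follows by Fubini from $\int_{\R}F(K\cap E_{u,p})\,dp=V$; and the point--plane incidence relation $dP^{(2)}_{E}\,dE=dP\,d\mu_{P}(E)$, where $dP^{(2)}_{E}$ is the area element of $E$, $dP$ is Lebesgue measure in $\euc$, and $\mu_{P}$ is the invariant measure of total mass $2\pi$ on the planes through $P$, a plane being identified with its unit normal. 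These two normalizations are mutually consistent and are the ones forced by \eqref{LL}.

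First I would work plane by plane. Fix an affine plane $E$ with unit normal $u$ and a point $P\in E\setminus K$. A ray $P+tw$, $t\ge0$, with $w$ parallel to $E$ stays in $E$, hence meets $K$ if and only if it meets the planar convex set $K\cap E$; consequently the visual angle $\omega_{E}(P)$ of $K\cap E$ seen from $P$ inside $E$ equals the length of the arc $\Omega(P)\cap C_{u}$, where $C_{u}=\{w\in S^{2}:\langle w,u\rangle=0\}$ is the great circle of directions parallel to $E$. Applying \eqref{CCP} to $K\cap E$, integrating over all affine planes $E$, and treating the $2\pi F$ term by the slicing identity, I obtain
\begin{equation*}
\int L(K\cap E)^{2}\,dE=4\pi^{2}V+\int_{E}\int_{P\in E\setminus K}2\big(\omega_{E}(P)-\sin\omega_{E}(P)\big)\,dP^{(2)}_{E}\,dE .
\end{equation*}
Then I would exchange the order of integration using the incidence relation, noting that for $P\in E$ the conditions $P\notin K$ and $P\notin K\cap E$ coincide, to rewrite the last term as $\int_{P\notin K}\big(\int_{E\ni P}2(\omega_{E}(P)-\sin\omega_{E}(P))\,d\mu_{P}(E)\big)\,dP$.

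The remaining point is the pointwise identity, for each $P\notin K$ and with $\Omega=\Omega(P)$,
\begin{equation*}
\int_{E\ni P}2\big(\omega_{E}(P)-\sin\omega_{E}(P)\big)\,d\mu_{P}(E)=|\Omega|^{2}.
\end{equation*}
Write $\ell(u)$ for the length of $\Omega\cap C_{u}$, so $\omega_{E}(P)=\ell(u)$ when $u\perp E$. I would start from the elementary identity $\int_{0}^{\ell}\int_{0}^{\ell}|\sin(s-t)|\,ds\,dt=2(\ell-\sin\ell)$, valid for $0\le\ell\le\pi$ (the relevant range, since the visual angle of a convex body from an exterior point is at most $\pi$), together with $|\sin\angle(w_{1},w_{2})|=|w_{1}\times w_{2}|=|\det(w_{1},w_{2},u)|$ for $w_{1},w_{2}$ on the great circle $C_{u}$; taking $s,t$ to be arc length along $C_{u}$, this expresses $2(\ell(u)-\sin\ell(u))$ as the double integral of $|\det(w_{1},w_{2},u)|$ over $(\Omega\cap C_{u})\times(\Omega\cap C_{u})$. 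Since $\mu_{P}$ is one half of the area measure of $S^{2}$ under the antipodal identification and $\ell(u)=\ell(-u)$, the left hand side above is one half of the integral of $|\det(w_{1},w_{2},u)|$ over the flag manifold $\{(u,w_{1},w_{2}):u\in S^{2},\ w_{1},w_{2}\in\Omega\cap C_{u}\}$ against $du\,ds_{u}(w_{1})\,ds_{u}(w_{2})$. The projection $(u,w_{1},w_{2})\mapsto(w_{1},w_{2})$ of this flag manifold onto $\Omega\times\Omega$ is two to one, with fibre the two poles $\pm(w_{1}\times w_{2})/|w_{1}\times w_{2}|$, and the associated spherical Blaschke--Petkantschin Jacobian identity reads $\pi_{*}\big(du\,ds_{u}(w_{1})\,ds_{u}(w_{2})\big)=2|w_{1}\times w_{2}|^{-1}\,dw_{1}\,dw_{2}$. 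As $|\det(w_{1},w_{2},u)|=|w_{1}\times w_{2}|$ on the flag manifold, the two weights cancel and the integral collapses to $\tfrac12\int_{\Omega}\int_{\Omega}2\,dw_{1}\,dw_{2}=|\Omega|^{2}$. Substituting this back into the formula of the previous paragraph gives \eqref{LL}.

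I expect the main obstacle to be this last step, namely the spherical Blaschke--Petkantschin change of variables together with the bookkeeping of the numerical constants (the two-to-one covering, the mass $2\pi$ of $\mu_{P}$, and the normalization of $dE$). The Jacobian can be obtained by a direct local computation in a rotated frame in which $u$ is the north pole and $C_{u}$ the equator; alternatively, and more quickly, one can fix the constant $2$ by testing the claimed pushforward identity against the constant function $1$, which gives $\int_{S^{2}}(2\pi)^{2}\,du=16\pi^{3}$ on one side and $2\int\int_{S^{2}\times S^{2}}|w_{1}\times w_{2}|^{-1}\,dw_{1}\,dw_{2}=2\cdot 2\pi^{2}\cdot 4\pi=16\pi^{3}$ on the other. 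A secondary point to be careful about is the interchange of the order of integration and the verification that, after swapping, the outer region is exactly $\{P\notin K\}$, which I indicated above.
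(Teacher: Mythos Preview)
Your proof is correct, and in fact the paper itself remarks that its argument ``is equivalent to integration on $E$ of the planar Crofton formula \eqref{CCP} for $E\cap K$ within $E$,'' which is precisely your starting point. The paper's own execution, however, is more direct: it computes in two ways the measure of pairs of intersecting lines $G^1,G^2$ both meeting $K$, using only the kinematic identity $dG_P^1\,dG_P^2\,dP=dG_E^1\,dG_E^2\,dE$. On the $P$-side one reads off $4\pi^2 V+\int_{P\notin K}|\Omega(P)|^2\,dP$ immediately, since the measure of lines through $P$ meeting $K$ is $|\Omega(P)|$; on the $E$-side the planar Cauchy--Crofton formula gives $\int L(K\cap E)^2\,dE$. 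Your route effectively factors this single identity into three pieces---the planar Crofton formula inside each $E$, the incidence relation $dP_E^{(2)}\,dE=dP\,d\mu_P(E)$, and the spherical Blaschke--Petkantschin step that reassembles $|\Omega|^2$ from the sliced quantities $2(\omega_E-\sin\omega_E)$. The paper's version is shorter and sidesteps all the Jacobian bookkeeping you flag as the main obstacle; on the other hand, your argument isolates an attractive pointwise identity, $\int_{E\ni P}2(\omega_E(P)-\sin\omega_E(P))\,d\mu_P(E)=|\Omega(P)|^2$, that is not made explicit in the paper.
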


As a  consequence of Theorem \ref{thmalpha} and Minkowski's inequality $12 \pi V\leq MF$ (see \cite{minkowski}) one has
$$\int_{P\notin K} \alpha(\Omega(P))\,dP\geq 4\pi^2 V,$$
with equality only when  $K$ is a ball.

\medskip
Theorem \ref{slices} has as a consequence:

\begin{thm}\label{bola2} For all convex sets
\begin{equation}\label{bbola2}
\int_{P\notin K} |\Omega(P)|^{2}\, dP\geq 4\pi^2 V,
\end{equation}
and equality holds if and only if  $K$ is a ball.
\end{thm}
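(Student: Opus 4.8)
The plan is to deduce \eqref{bbola2} from Theorem~\ref{slices} together with the classical planar isoperimetric inequality. Rewriting \eqref{LL} as
\begin{equation*}
\int_{P\notin K}|\Omega(P)|^2\, dP=\int L(K\cap E)^2\, dE-4\pi^2 V,
\end{equation*}
we see that \eqref{bbola2} is equivalent to the estimate $\int L(K\cap E)^2\, dE\geq 8\pi^2 V$. Now for every affine plane $E$ for which $K\cap E$ has non-empty interior in $E$, the planar convex body $K\cap E$ satisfies $L(K\cap E)^2\geq 4\pi\,A(K\cap E)$, where $A$ denotes area in $E$, with equality exactly when $K\cap E$ is a round disk. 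Integrating this pointwise inequality against the invariant measure $dE$ and using the classical Cauchy--Crofton formula for affine planes,
\begin{equation*}
\int A(K\cap E)\, dE=2\pi V,
\end{equation*}
which (with the normalization of $dE$ used in Theorem~\ref{slices}) follows from Fubini's theorem by integrating, for each fixed unit normal direction, the cross-sectional area of $K$ in that direction, we obtain
\begin{equation*}
\int L(K\cap E)^2\, dE\geq 4\pi\int A(K\cap E)\, dE=8\pi^2 V,
\end{equation*}
which is \eqref{bbola2}. (If $K$ has empty interior then $V=0$ and the inequality is trivial, so from now on assume $K$ is a convex body.)

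For the equality case, note that if $K$ is a ball then every planar section of $K$ is a disk, so the isoperimetric inequality above is an identity for each $E$, and hence so is \eqref{bbola2}. Conversely, suppose equality holds in \eqref{bbola2}. Then $L(K\cap E)^2=4\pi\,A(K\cap E)$ for $dE$-almost every plane $E$ meeting $\mathrm{int}\,K$, i.e.\ $K\cap E$ is a disk for almost every such $E$. Since the set of planes meeting $\mathrm{int}\,K$ is open and the maps $E\mapsto L(K\cap E)$ and $E\mapsto A(K\cap E)$ are continuous on it, the subset where $L(K\cap E)^2=4\pi\,A(K\cap E)$ is closed there; being also of full measure, it is the whole set. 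Hence $K\cap E$ is a disk for \emph{every} plane $E$ meeting $\mathrm{int}\,K$.

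It then remains to show that a convex body all of whose sections by planes meeting its interior are disks is a ball. First, $\partial K$ contains no segment: such a segment would lie in the boundary of a planar section meeting $\mathrm{int}\,K$, which is a disk, whose boundary is a circle. Consequently, for any $x,y\in K$ the open segment $(x,y)$ lies in $\mathrm{int}\,K$. Now let $p,q\in K$ realize the diameter $D=\mathrm{diam}\,K$ and let $m$ be the midpoint of $[p,q]$, so $m\in\mathrm{int}\,K$. Every plane $E$ through the line $pq$ passes through $m$, hence cuts $K$ in a disk which contains $p$ and $q$ at the maximal distance $D$; therefore $[p,q]$ is a diameter of that disk, its centre is $m$ and its radius $D/2$, so $K\cap E=\overline{B}(m,D/2)\cap E$. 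Rotating $E$ about $pq$ sweeps out $\overline{B}(m,D/2)$, giving $\overline{B}(m,D/2)\subseteq K$; the same argument applied to the planes through $p$, $q$ and an arbitrary $y\in K$ gives $|y-m|\leq D/2$, that is $K\subseteq\overline{B}(m,D/2)$. Hence $K=\overline{B}(m,D/2)$ is a ball.

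The inequality itself is the easy part, being an immediate combination of Theorem~\ref{slices}, the planar isoperimetric inequality, and the Crofton formula $\int A(K\cap E)\,dE=2\pi V$. The main obstacle is the equality analysis: first upgrading the almost-everywhere identity in the isoperimetric inequality to an identity for all sections through interior points, and then proving the rigidity statement that a convex body all of whose planar sections are disks must be a ball.
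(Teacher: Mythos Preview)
Your proof is correct and follows the same route as the paper: combine Theorem~\ref{slices} with the planar isoperimetric inequality and the Crofton identity $\int A(K\cap E)\,dE=2\pi V$ from \eqref{croftonE}. The paper dispatches the equality case in one line (``all $K\cap E$ are discs, and this implies easily that $K$ is a ball''), whereas you supply the details the paper omits---the continuity argument upgrading the a.e.\ identity to an everywhere identity, and the rigidity step via a diameter pair---so your write-up is in fact more complete.
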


Part of our analysis will consist in understanding the set functions $\alpha, \beta$ in terms of metric properties of $\Omega$. For the set function $\alpha$ a satisfactory description is easily obtained, namely
\begin{equation}\label{alphaex}
\alpha(\Omega)=\pi |\Omega|-\langle c(\Omega), c(\Omega^{*})\rangle,
\end{equation}
where $c(\Omega)=\int_{\Omega} u\, du$ and similarly $c(\Omega^{*})$ are (unweighted) centroids. Again, the analogue of this expression for an arc $I$ in $S^1$ ($I^{*}$ being in this case the concentric arc with length $\pi-\omega$) equals $\omega-\sin\omega$ up to constants. It is possible to express everything just in terms of $\Omega$. The explicit computation of $\alpha(\Omega)$ is possible just for spherical caps and other simple cases. 

For the set function $\beta$ the description is not so neat. In fact we show that the difference $\beta(\Omega)-\frac{\pi^2}{2}\alpha(\Omega)$ can be expressed in terms of the {\em dihedral visual angles} ${\mathcal D}(\Omega,u)$ of $\Omega$ from points $u\in S^2$ not in $\Omega$, the analogue in spherical geometry of the visual angle. Note that ${\mathcal D}(\Omega,u)$ is the angle between two planes through the origin and $u$ tangent to $\Omega$. From this it can be seen that a linear combination of equations \eqref{pairs} and \eqref{triples} is the classical  Crofton-Herglotz formula, so Theorem \ref{thmalpha}  and Theorem \ref{thmbeta} can be seen as equivalent statements (see section \ref{2402}).

Regarding theorem \ref{slices}, it would be interesting to understand the left-hand side of \eqref{LL} in terms of the geometry of $K$. 

\section{Proofs of theorems}

\subsection{ Proof of theorems \ref{thmalpha}, \ref{thmbeta}, \ref{slices} and \ref{bola2}} All of them are obtained by  mimicking the integral geometry proof of the plane Crofton formula \eqref{CCP}. We denote by $E,G$ affine planes and lines in space, respectively, and by $dE, dG$ the corresponding canonical invariant measures as used for instance in \cite{santalo}. 

From the classical Crofton's formulas for  intrinsic volumes, mean curvature integral $M$,  area of the boundary $F$ and volume $V$ (see \cite{santalo}) we have,
for  a given compact convex set $K\subset \euc$,  
\begin{equation}\label{croftonG}
\int_{G\cap K\neq\emptyset}dG=\frac{\pi}2 F,\quad \int L(K\cap G)dG=2\pi V, \\
\end{equation}
and
\begin{equation}\label{croftonE}
\int_{E\cap K\neq\emptyset}dE=M, \int L(K\cap E)dE=\frac{\pi^{2}}2F, \int A(K\cap E)dE=2\pi V. 
\end{equation}

We shall consider  pairs and triples of linear varieties provided with the product measure. It will be useful to express these measures using the parametrizations in  next lemma, whose proof can be deduced from section 12 of  \cite{santalo}:

\newpage

\begin{lem}\label{lema1}  
\begin{enumerate}[a)] The following holds:
\item \begin{equation}\label{dgde}
dG\ dE=|\langle u,v\rangle|dG_P\ dE_P \ dP
\end{equation}
where $P=G\cap E$, $v$ is a unit normal direction of $E$,  $u$ is a unit direction of $G$, $dP$ is the Lebesgue measure on  $\euc$, $dG_P$ is the measure of lines in $\euc$ through $P$ and $dE_P$ the measure of planes in $\euc$ through $P$. 
\item \begin{equation}\label{de1de2de3}
dE^1\ dE^2\ dE^3=|\det(v_{1}, v_{2}, v_{3})|dE^1_P\ dE^2_P\ dE^3_P \ dP.
\end{equation}
where $v_{i}$ are the unit  normal directions to $E_{i}$ and $P=E_{1}\cap E_{2}\cap E_{3}.$ 
\item For pairs of lines $G^1, G^2$  intersecting at a point $P\in \euc$ we have
\begin{equation}\label{parellsGaR3}
dG_P^1\, dG_P^2\, dP=dG^1_E\, dG^2_E\, dE
\end{equation} 
 where $dG_E$ is the measure of lines within $E$.
\end{enumerate}
In all statements, we can identify $dG_P, dE_P$ with $\frac 12 du, \frac 12 dv$ respectively.
\end{lem}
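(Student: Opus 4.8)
The plan is to reduce each of \eqref{dgde}, \eqref{de1de2de3} and \eqref{parellsGaR3} to an elementary change of variables, using the standard ``normal'' parametrizations of the canonical measures recalled in Section~12 of \cite{santalo}: an affine plane $E$ is coded by its unit normal $v\in S^{2}$ together with its signed distance $p\in\R$ to the origin, with $dE=\tfrac12\,dp\,dv$, and an affine line $G$ is coded by its unit direction $u\in S^{2}$ together with the foot $y\in u^{\perp}$ of the perpendicular from the origin, with $dG=\tfrac12\,dy\,du$. In each part one argues on the open dense set of transverse configurations ($G\not\parallel E$ in (a); $v_{1},v_{2},v_{3}$ linearly independent in (b); $G^{1}\not\parallel G^{2}$ in (c)), whose complement has measure zero and hence is irrelevant for an identity between measures.

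For (a) I keep $u$ and $v$ --- which are precisely the data of the line $G_{P}$ through the incidence point $P=G\cap E$ and of the plane $E_{P}$ through it --- and trade the remaining three parameters $(y,p)$ for $P\in\euc$. The inverse substitution $P\mapsto(y,p)=\bigl(\pi_{u^{\perp}}(P),\langle P,v\rangle\bigr)$, where $\pi_{u^{\perp}}$ denotes the orthogonal projection onto $u^{\perp}$, is linear and, in an orthonormal basis of $\euc$ whose last vector is $u$, lower triangular with diagonal $(1,1,\langle u,v\rangle)$; hence $dy\,dp=|\langle u,v\rangle|\,dP$ and
\begin{equation*}
dG\,dE=\tfrac14\,du\,dy\,dv\,dp=|\langle u,v\rangle|\,\bigl(\tfrac12\,du\bigr)\bigl(\tfrac12\,dv\bigr)\,dP .
\end{equation*}
Identifying $\tfrac12\,du$ with the measure $dG_{P}$ of lines through $P$ and $\tfrac12\,dv$ with $dE_{P}$ gives \eqref{dgde} together with the last assertion of the lemma. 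Part (b) is the same with three planes: the map $P\mapsto(p_{1},p_{2},p_{3})=(\langle P,v_{1}\rangle,\langle P,v_{2}\rangle,\langle P,v_{3}\rangle)$ is linear with determinant $\det(v_{1},v_{2},v_{3})$, so $dp_{1}\,dp_{2}\,dp_{3}=|\det(v_{1},v_{2},v_{3})|\,dP$, whence $dE^{1}\,dE^{2}\,dE^{3}=\tfrac18\,dv_{1}\,dv_{2}\,dv_{3}\,dp_{1}\,dp_{2}\,dp_{3}=|\det(v_{1},v_{2},v_{3})|\,\bigl(\tfrac12\,dv_{1}\bigr)\bigl(\tfrac12\,dv_{2}\bigr)\bigl(\tfrac12\,dv_{3}\bigr)\,dP$, which is \eqref{de1de2de3}.

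Part (c) is the step I expect to be delicate, since now both sides carry lower-dimensional incidence data and two Jacobians must be multiplied. Let $G^{1},G^{2}$ meet at $P$ with directions $u_{1},u_{2}$, let $E=\mathrm{span}(G^{1},G^{2})$ have unit normal $v$, and let $\phi$ be the angle between the two lines. Representing $G^{i}$ inside $E$ as $\{x\in E:\langle x,n_{i}\rangle=s_{i}\}$, with $n_{i}\in v^{\perp}$ the in-plane unit normal to $u_{i}$ and $s_{i}$ the signed distance from the foot $pv$ of the origin, the point $P$ is the unique solution of $\langle P,v\rangle=p$, $\langle P,n_{1}\rangle=s_{1}$, $\langle P,n_{2}\rangle=s_{2}$, so that $dp\,ds_{1}\,ds_{2}=|\det(v,n_{1},n_{2})|\,dP=|\sin\phi|\,dP$. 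The complementary ingredient is the Jacobian of the map $(v,\theta_{1},\theta_{2})\mapsto(u_{1},u_{2})$, where $\theta_{i}$ locates $u_{i}$ on the great circle $v^{\perp}$: a one-line computation with an orthonormal frame $a(v),b(v)$ of $v^{\perp}$ parallel at the base point (there $\partial u_{i}/\partial\theta_{i}=n_{i}$, while $\partial u_{i}/\partial v$ is the map $w\mapsto-\langle u_{i},w\rangle\,v$) gives $du_{1}\,du_{2}=|\sin\phi|\,dv\,d\theta_{1}\,d\theta_{2}$. Multiplying the two, the factors $|\sin\phi|$ cancel, and what is left is the orientation bookkeeping: because $E$ has two unit normals, the parameters $(v,p,\theta_{1},s_{1},\theta_{2},s_{2})$ cover each configuration $(P,u_{1},u_{2})$ exactly twice, and this factor $2$ together with the $\tfrac18$ coming from $dE=\tfrac12\,dp\,dv$ and from writing each $dG^{i}_{E}=\tfrac12\,d\theta_{i}\,ds_{i}$ reproduces precisely the constant $\tfrac14=(\tfrac12)^{2}$ attached to $dG^{1}_{P}\,dG^{2}_{P}=(\tfrac12\,du_{1})(\tfrac12\,du_{2})$, which yields \eqref{parellsGaR3}. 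In all three parts the only genuine content is the evaluation of a few $3\times3$ determinants and one elementary spherical Jacobian; the sole point demanding care is keeping the normalizing constants and the $\pm$-ambiguities of normals and directions mutually consistent.
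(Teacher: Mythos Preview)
Your argument is correct. The paper itself gives no proof of this lemma, merely pointing to Section~12 of \cite{santalo}; you have supplied exactly the kind of computation that reference encodes, namely writing each invariant density in its normal parametrization and performing the obvious linear change of variables. Parts (a) and (b) are immediate once one sees the triangular Jacobian for $P\mapsto(\pi_{u^{\perp}}P,\langle P,v\rangle)$ and the matrix $(v_1,v_2,v_3)$ for $P\mapsto(\langle P,v_1\rangle,\langle P,v_2\rangle,\langle P,v_3\rangle)$. For part (c) your two Jacobians $dp\,ds_1\,ds_2=|\sin\phi|\,dP$ and $du_1\,du_2=|\sin\phi|\,dv\,d\theta_1\,d\theta_2$ are both right (the second is the standard Blaschke--Petkantschin type identity on $S^2$, and your frame computation with $\partial u_i/\partial w=-\langle u_i,w\rangle v$ is a clean way to get it), and the cancellation of $|\sin\phi|$ together with the $2$-to-$1$ count coming from $\pm v$ does reconcile the $\tfrac14$ and $\tfrac18$ exactly as you say. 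The only place to be cautious---and you flag it yourself---is that the convention $dG^i_E=\tfrac12\,d\theta_i\,ds_i$ presupposes $\theta_i$ ranging over a full period with the identification $(\theta_i,s_i)\sim(\theta_i+\pi,-s_i)$; this is consistent with your multiplicity count, so the bookkeeping closes.
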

\medskip

We can now proceed to prove the announced theorems.
\medskip

{\em Proof of Theorem \ref{thmalpha}}. We consider pairs $(E,G)$ of planes and lines both meeting $K$.
From \eqref{croftonG}, \eqref{croftonE} and \eqref{dgde}, 
$$
\frac{\pi}2 MF=\int\limits_{ G\cap K\neq\emptyset, E\cap K\neq\emptyset} dG\, dE
=\int_{\euc}\int\limits_{ G_P\cap K\neq\emptyset, E_P\cap K\neq\emptyset} |\langle u,v\rangle| dG_P\ dE_P \ dP.
$$
If $P\in K$, there is no restriction on $G_P, E_P$. Since they are doubly parametrized bu $u,v$ respectively,

$$\int |\langle u,v\rangle| dG_P\ dE_P=\frac 14 \int_{u,v\in S^2} |\langle u,v\rangle| du\, dv.$$
Obviously the $v$ integral does not depend on $u$, so the above equals

$$\pi \int |\langle u,v\rangle|\, dv.$$ 
Choosing $u=(0,0,1)$ and computing in spherical coordinates one obtains the value $2\pi^2$.

If $P\notin K$ then $G_P$ is parametrized by $u\in \Omega(P)$ and $E_P$ is doubly parametrized by $v\in \tilde{\Omega}(P)$ whence 
$$\int\limits_{ G_P\cap K\neq\emptyset, E_P\cap K\neq\emptyset} |\langle u,v\rangle| dG_P\ dE_P=\frac 12 \int_{u\in\Omega(P), v\in \tilde{\Omega}(P)} |\langle u,v\rangle|\, du\, dv=\alpha(\Omega(P)),$$
thus proving \eqref{pairs}. \qed
\medskip

{\em Proof of Theorem \ref{thmbeta}}. Here we use  triples of planes meeting $K$ and proceed analogously to the above proof. Using \eqref{croftonE} and \eqref{de1de2de3} it follows

$$M^3=\int_{\euc}\int\limits_{E_P^i\cap K\neq\emptyset} |\det(v_{1}, v_{2}, v_{3})|dE^1_P\ dE^2_P\ dE^3_P \ dP.$$
Again, if $P$ is within $K$, there is no restriction on the $E_P^i$ and
$$\int |\det(v_{1}, v_{2}, v_{3})|dE^1_P\ dE^2_P\ dE^3_P=\frac 18\int_{v_i\in S^2} |\det(v_{1}, v_{2}, v_{3})|\, dv_1\, dv_2\, dv_3.$$
The integral in $v_1,v_2$ is independent of $v_3$ so the above integral equals
$$\frac{\pi}{2}\int |\det(v_{1}, v_{2}, v_{3})|\, dv_1\, dv_2.$$
Choosing $v_3=(0,0,1)$ and computing in spherical coordinates we get the value $\pi^4$.

If $P\notin K$ then $E_P^i$ are doubly parametrized by $v_i\in \tilde{\Omega}(P)$ so that

$$\int\limits_{E_P^i\cap K\neq\emptyset} |\det(v_{1}, v_{2}, v_{3})|dE^1_P\, dE^2_P\, dE^3_P=\beta(\Omega(P)),$$
and \eqref{triples} is proved. \qed
\medskip

{\em Proof of Theorem \ref{slices}}. We use now pairs  of intersecting lines, both meeting $K$. The measure of this set of lines is
$$\int_{P\in\euc, G_P^i\cap K\neq \emptyset} dG_P^1\, dG_P^2\, dP=\int_{\euc}\bigg(\int_{G_P^i\cap K\neq \emptyset}dG_P^1\, dG_P^2\bigg)\, dP.$$
The contribution of $K$ in the $dP$ integral is $(2\pi)^2 V$, while that of $K^c$ is $\int_{P\notin K} |\Omega(P)|^2 \, dP$. 
On the other hand, by \eqref{parellsGaR3}, this equals
$$\int \bigg(\int_{G_1, G_2\subset E, G_i\cap K\neq \emptyset} dG^1_E\, dG^2_E\bigg) dE.$$
Since $\int_{G\subset E, G\cap K\neq \emptyset}\,dG=L(K\cap E)$ (the Cauchy-Crofton formula in the plane $E$), we are done.  \qed

We point out that this argument is equivalent to integration on $E$ of the planar Crofton formula \eqref{CCP} for $E\cap K$ within $E$.

\medskip
{\em Proof of Theorem \ref{bola2}}. Using the isoperimetric inequality in the plane $E$ the left hand side of \eqref{LL} is bigger than
$$4\pi \int_E A(K\cap E)\, dE,$$
which by the last equality in \eqref{croftonE} equals $8\pi^2V$,
thus proving \eqref{bbola2}. If equality holds, then $L(K\cap E)^2=4\pi A(K\cap E)$ for all $E$, then all $K\cap E$ are discs, and this implies easily that $K$ is a ball. \qed

\subsection{On the set function $\alpha$} To prove the relation \eqref{alphaex} just notice that
$$\int_{v\in \tilde{\Omega}}|\langle u, v\rangle|\,  dv=\int_{v\in S^2} |\langle u, v\rangle|\,  dv-2\int_{v\in \Omega^{*}} |\langle u, v\rangle|\,  dv.$$
The first integral does not depend on $u$ and equals $2\pi$, while in the second one $\langle u, v\rangle$ is positive. Altogether,
$$\alpha(\Omega)=\pi\int_{u\in \Omega} du-\int_{u\in\Omega, v\in \Omega^{*}} \langle u, v\rangle\, du\,dv=\pi |\Omega|-\langle c(\Omega), c(\Omega^{*})\rangle. $$	
When $\Omega$ is a  cap in $S^{2}$ with sperical radius $\omega$ then $$\displaystyle \alpha(\Omega)=2\pi^{2} (1-\cos \omega)-\pi^{2} \cos^2\omega \sin^{2}\omega.$$

In order to express this function  in terms of $\Omega$ we find a relation between  the centroids of $\Omega$ and $\Omega^{*}$ using a parametrization of the boundary of $\Omega$. We consider the orientation in $\Omega$ given by the unit outward normal to $S^2$ and 
let  $\gamma(t)$, $0\leq t\leq \ell $ be the arc-length parametrization  of $\partial\Omega$ with the induced orientation, so $T=\gamma'$ is the unit tangent. 
\begin{prop}\label{centroidsprop}\mbox{}

\begin{enumerate}[a) ]
\item $$c(\Omega)=\frac 12\int_{0}^{\ell }\gamma(t) \times \gamma'(t) \ dt, \quad c(\Omega^{*})=\frac 12 \int_{0}^{\ell }k_{g}(t) \gamma(t)\ dt,
$$
where $k_{g}(t)$ is the geodesic curvature of $\gamma(t)$.
\item 
$$ c(\Omega)+c(\Omega^{*})=\frac12\int_{0}^{\ell }\gamma'(t)\times \gamma''(t)\ dt = \frac12\int_{0}^{\ell }k(t)\vec{B}(t)\ dt.
$$
where $k(t)$ is the curvature of $\gamma(t)$ and $\vec{B}(t)$ its binormal.
\end{enumerate}
\end{prop}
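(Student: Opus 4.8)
The plan is to derive both centroid formulas from a common source: the divergence theorem (Stokes' theorem) on the spherical cap-like region $\Omega\subset S^2$, applied to cleverly chosen vector fields, together with the classical identification of $\Omega^*$ with the dual curve region. Throughout, $\gamma(t)$, $0\le t\le\ell$, is the unit-speed boundary parametrization, $T=\gamma'$ is the unit tangent, and $\{\gamma,T,\gamma\times T\}$ is a positively oriented orthonormal frame along $\partial\Omega$ (here $\gamma\times T$ is the inward conormal, or its negative, depending on orientation conventions, which I would fix at the outset).

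For the first formula in part a), the identity $c(\Omega)=\int_\Omega u\,du$ should be rewritten as a boundary integral. The key observation is that on $S^2$ the position vector field $u$ is, componentwise, related to an area form: for a fixed vector $e$, the function $\langle u,e\rangle$ integrated over $\Omega$ equals (up to sign) the flux of a suitable field, and one checks that $\tfrac12\,\gamma\times\gamma'$ integrates to exactly $\int_\Omega u\,du$ — this is the spherical analogue of the shoelace/vector-area formula $\tfrac12\oint \gamma\times d\gamma$ for a planar region, and on the sphere the same computation goes through because $\gamma$ is a unit vector. Concretely, I would verify $\operatorname{div}_{S^2}$ of the tangential part of a constant field reproduces $2\langle u,e\rangle$ and that the boundary term is $\langle e,\gamma\times\gamma'\rangle$. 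For the second formula in part a), I would use the description of $\Omega^*$ via the dual curve: the boundary $\partial\Omega^*$ is traced by the unit normals to the great circles tangent to $\partial\Omega$, i.e. by $\gamma\times T$ (the conormal), and the speed of this parametrization is governed by the geodesic curvature $k_g$. Applying the same vector-area identity to $\Omega^*$ with its boundary parametrized (non-unit-speed) by $n(t)=\gamma(t)\times\gamma'(t)$ gives $c(\Omega^*)=\tfrac12\int_0^\ell n\times n'\,dt$; expanding $n'$ using the Frenet-type equations on $S^2$ (namely $\gamma'=T$, $T'=-\gamma+k_g\,(\gamma\times T)$) and simplifying the triple cross product should collapse this to $\tfrac12\int_0^\ell k_g\,\gamma\,dt$.

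Part b) then follows by adding the two expressions from part a) and recognizing a telescoping/derivative structure: $\gamma\times\gamma' + k_g\,\gamma$ should equal $\gamma'\times\gamma''$. Indeed, differentiating $\gamma'=T$ gives $\gamma''=T'=-\gamma+k_g(\gamma\times T)$, so $\gamma'\times\gamma'' = T\times(-\gamma+k_g(\gamma\times T)) = -T\times\gamma + k_g\,T\times(\gamma\times T)$; using $T\times(\gamma\times T)=\gamma - \langle\gamma,T\rangle T = \gamma$ and $-T\times\gamma=\gamma\times T$ yields $\gamma\times T + k_g\gamma$, as wanted. Hence $c(\Omega)+c(\Omega^*) = \tfrac12\int_0^\ell \gamma'\times\gamma''\,dt$. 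The final rewriting as $\tfrac12\int_0^\ell k(t)\vec B(t)\,dt$ is just the standard space-curve Frenet identity $\gamma'\times\gamma'' = k\,\vec B$ (valid since $|\gamma'|=1$), requiring no further work.

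I expect the main obstacle to be part a): both pinning down the correct orientation-dependent signs in the vector-area identity on $S^2$ (so that the stated formulas come out with $+\tfrac12$ and not $-\tfrac12$), and, more substantively, justifying rigorously that the dual region $\Omega^*$ has boundary parametrized by $t\mapsto\gamma(t)\times\gamma'(t)$ with the geodesic curvature appearing as the Jacobian factor. This is essentially the statement that the spherical dual (polar) transformation sends $\partial\Omega$ to $\partial\Omega^*$ via the conormal map, and that its derivative has length $|k_g|$; I would establish this by a direct local computation using the $S^2$-Frenet equations above, which also explains why the identity in part a) for $c(\Omega^*)$ is not merely the vector-area formula applied naively but requires tracking this reparametrization carefully. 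Once these sign and parametrization points are settled, everything else is the routine cross-product algebra sketched above.
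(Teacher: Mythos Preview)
Your proposal is correct and, for part a), follows essentially the same route as the paper: the paper obtains $c(\Omega)=\tfrac12\int\gamma\times\gamma'\,dt$ by writing each constant field as a curl in $\R^3$ and applying Stokes (your intrinsic $\mathrm{div}_{S^2}$ formulation is equivalent), then parametrizes $\partial\Omega^*$ by $\gamma^*=\gamma\times\gamma'$ and computes $\gamma^*\times(\gamma^*)'=k_g\gamma$ directly. One remark: your worry about ``tracking this reparametrization carefully'' and the Jacobian $|k_g|$ is unnecessary, since the vector-area integrand $\gamma\times d\gamma$ is a $1$-form and hence parametrization-invariant; you may apply it to the non-unit-speed $\gamma^*$ without any change of variables.

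For part b) your argument is actually cleaner than the paper's. The paper sets $\sigma=\gamma\times\gamma'+k_g\gamma$ and computes its three components in the ambient Frenet frame $\vec T,\vec N,\vec B$ separately, invoking the spherical relation $k^2=1+k_g^2$ to conclude $\sigma=k\vec B$. You instead use the $S^2$-Darboux equation $\gamma''=-\gamma+k_g(\gamma\times T)$ to expand $\gamma'\times\gamma''$ directly and read off $\gamma\times\gamma'+k_g\gamma$ in one line; the identification with $k\vec B$ is then immediate from $\gamma'\times\gamma''=\vec T\times k\vec N$. Your route avoids the component-by-component check and the explicit use of $k^2=1+k_g^2$, at the small cost of committing to a sign convention for $k_g$ up front.
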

\begin{proof}
If $u=(x,y,z)$, the first component of $c(\Omega)$ is $\int_{\Omega} x du$, the flow through $\Omega$ of the vector field $X=(1,0,0)$. Since $X=\nabla\times Y, Y= \frac 12 (0,-z,y)$ this component equals
$$\frac 12 \int_{\partial \Omega}\langle T,Y(\gamma(t))\rangle \, dt.$$
Now $\langle T,Y\rangle$ equals the first component of $\gamma\times \gamma'(t)$. Similarly the other components, so the first formula in a) is proved. Next, notice that  $\gamma^{*}(t)=\gamma(t)\times \gamma'(t)$ parametrizes the dual curve, the boundary of $\Omega^{*}$, whence one has as well
$$c(\Omega^{*})=\frac 12 \int_{0}^{\ell }\gamma^{*}(t) \times (\gamma^{*})'(t) \ dt.$$
Now,
$$
\gamma^{*}  \times {\gamma^{*}}'=(\gamma\times \gamma')\times (\gamma\times \gamma')'=(\gamma\times \gamma')\times (\gamma\times \gamma'')=\det(\gamma,\gamma',\gamma'')\gamma=k_{g}\gamma
$$
and a) is proved. 

In order to prove b) we simplify  the notation  writing  $\sigma = \gamma\times \gamma'+\gamma^{*}\times{\gamma^{*}}'.$ Denote by $\vec{T},\vec{N},\vec{B}$  the Frenet frame of $\gamma$. From the definitions it is easy to see that $\langle \sigma, \vec{T}\rangle = 0$. Also
$$
\langle \sigma, \vec{N}\rangle =\frac1k\langle \gamma\times \gamma',\gamma''\rangle +\frac1k\langle \det(\gamma,\gamma',\gamma'')\gamma, \gamma''\rangle=
\frac1k\langle \gamma\times \gamma',\gamma''\rangle -\frac1k \det(\gamma,\gamma',\gamma'')=0
$$
because $\langle \gamma,\gamma'\rangle=0$ and so $ \langle\gamma, \gamma''\rangle=-1.$ Now we compute $\langle \sigma,\vec{B}\rangle,$
\begin{eqnarray*}
\langle \sigma,\vec{B}\rangle & =& \langle \sigma,\vec{T}\times \vec{N} \rangle = \frac1k\langle \sigma, \gamma'\times \gamma''\rangle  =\frac1k
\langle \gamma\times \gamma'+\gamma^{*}\times{\gamma^{*}}', \gamma'\times \gamma''\rangle=\\
&=& \frac1k\langle \gamma\times \gamma', \gamma'\times \gamma''\rangle +\frac1k\langle \gamma^{*}\times{\gamma^{*}}', \gamma'\times \gamma''\rangle=\\
&=& -\frac1k\langle \gamma, \gamma''\rangle \langle\gamma', \gamma'\rangle+\frac1k\langle k_{g}\gamma, \gamma'\times \gamma''\rangle=
\frac1k(1+k_{g}^{2}).
\end{eqnarray*}
The curve $\gamma$ being  on the unit sphere  we have that $k^{2}=1+k_{g}^{2}$; therefore
$
\langle \sigma,\vec{B}\rangle=k
$
and we conclude that 
$$
\gamma\times \gamma'+\gamma^{*}\times{\gamma^{*}}'=k\vec{B}.
$$
Since $\gamma'\times \gamma''=\vec{T}\times k\vec{N}=k\vec{B}$  the proposition  is  proved.

\end{proof}


\subsection{On the set function $\beta$}\label{2402}

Here we wish to obtain an alternative expression for \eqref{beta} which will lead us to the Crofton-Herglotz formula. 

 Let $u=v_2\times v_3/|v_2\times v_3|$. To specify a basis for $u^{\bot}$ we write $u$ in spherical coordinates, $u=(\sin\varphi\cos\theta, \sin\varphi\sin\theta, \cos\varphi)$, and define
$$
e_{1}=\frac{\partial}{\partial\varphi}= (\cos\varphi\cos\theta, \cos\varphi\sin\theta, -\sin\varphi), 
\qquad e_{2}=\frac{1}{\sin\varphi}\frac{\partial}{\partial \theta}=(-\sin\theta,\cos\theta,0),$$
so that  $\{e_{1}, e_{2}, v\}$ is a positive  orthonormal basis. We write $v_2, v_3$ in this basis
$$
v_{2}=\cos\theta_{2}\cdot e_{1}+\sin\theta_{2}\cdot e_{2},\quad 
v_{3}=\cos\theta_{3}\cdot e_{1}+\sin\theta_{3}\cdot e_{2}.
$$
Then $v_{2},v_{3}$ are parametrized  by $u, \theta_{2},\theta_{3}$. Keeping in mind that the integral is in fact over the set of triplets of great circles meeting $\Omega$, so $\pm v_2, \pm v_3$ count as one, we let $0\leq \theta_2,\theta_3\leq \pi$ and  require that the angles $\theta_2, \theta_3$  be within the dihedral angle ${\mathcal D}(\Omega,u)$ determined by $\Omega$ and $u$. Then $u\in S^2, \theta_{2},\theta_{3}$ is a double parametrization of the set of pairs of great circles meeting $\Omega$. In this parametrization, it is immediate to check that  (cf.~\cite[(34.1)]{ReyPastor1951})
$$ 
dv_{2}dv_{3}=|\sin(\theta_{3}-\theta_{2})| d\theta_{2}d\theta_{3}dv,
$$
while

$$ 
|\det (v_{1},v_{2},v_{3})|=|v_{2}\times v_{3}|\cdot |\langle v_{1}, u\rangle|=|\sin(\theta_{3}-\theta_{2})|\cdot |\langle v_{1},u\rangle|.
$$
Thus
$$
\beta{\Omega)=\frac 14\int_{ v_{1}\in \tilde{\Omega}, u\in S^2, \theta_{i}\in {\mathcal D}(\Omega,u)}}\sin^{2}(\theta_{3}-\theta_{2})|\langle v_{1},u\rangle| dv_{1}\,d\theta_{2}\,d\theta_{3}\,du
$$

Now the integral with respect to  $\theta_{2}, \theta_{3}$ is easily computed. Denoting as well by  ${\mathcal D}(\Omega,u)$ the measure of the dihedral angle we get
$$\beta(\Omega)=\frac{1}{8}\int_{ v_{1}\in \tilde{\Omega},\, u\in S^2 }({\mathcal D}^{2}(\Omega,u)-\sin^{2}{\mathcal D}(\Omega,u))|\langle v_{1},u\rangle| dv_{1}\,du.$$

For $\pm u\in \Omega$ one has ${\mathcal D}(\Omega,u)=\pi$ whence the contribution of this part equals $\pi^2\alpha(\Omega)/2$. Thus
$$\beta(\Omega)=\frac{\pi^2}{2}\alpha(\Omega)+\gamma(\Omega),$$
with
$$\gamma(\Omega)=\frac 18\int_{ v\in \tilde{\Omega},\, \pm u\notin \Omega }({\mathcal D}^{2}(\Omega,u)-\sin^{2}{\mathcal D}(\Omega,u))|\langle v,u\rangle| dv\,du.$$

Thus theorems \ref{thmalpha} and \ref{thmbeta} imply
$$M^3-\frac 14\pi^3 MF=\int_{P\notin K}\gamma(\Omega(P))\, dP.$$

We now insert the definition of $\gamma(\Omega(P))$ and use \eqref{dgde}.  If $u$ the unit direction of $G, {\mathcal D}(\Omega(P),u)$ is the dihedral angle ${\mathcal D}(K,G)$ of $K$ as seen from $G$ so the right-hand side above equals

$$\frac 12\int_{E\cap K\neq \emptyset, G\cap K=\emptyset} ({\mathcal D}^{2}(K,G)-\sin^{2}{\mathcal D}(K,G)) dE\, dG=$$
$$=\frac 12 \int_{E\cap K\neq \emptyset} dE\bigg(\int_{G\cap K=\emptyset} ({\mathcal D}^{2}(K,G)-\sin^{2}{\mathcal D}(K,G)\bigg) \, dG.
$$
Using \eqref{croftonE} we obtain the classical Crofton-Herglotz formula  [\cite{Blaschke2007} p.75] 
$$\int_{G\cap K=\emptyset} ({\mathcal D}^{2}(K,G)-\sin^{2}{\mathcal D}(K,G)) \, dG=2M^2-\frac{\pi^3 F}{2}.$$

This shows that in the presence of Crofton-Herglotz formula, theorems \ref{thmalpha} and \ref{thmbeta} can be seen as equivalent statements.

\section{Some inequalities for convex sets of constant width}
In this section we will deal with compact convex sets $K$ of constant width.  For each of these sets we have  the relation
$R+r=a,$
where $a$ is the width of $K$, and  $r,R$ are respectively the inradius and the circumradius of $K$. Thus, denoting $c=r/R$, we have 
$$r=\frac{ac}{1+c}, \quad R=\frac{a}{1+c}.$$
From Jung's theorem (\cite[3.4.2]{Martini2019}) it follows that $c\geq \sqrt{8/3}-1=0.63...$ 
\begin{thm}\label{ineq}
Let $K$ be a compact convex set of constant width $a$ and  $c=r/R$ the quotient between the inradius and the cirumradius of $K$. Then 
\begin{eqnarray}\label{1612a}\int L(K\cap E)^{2}dE\leq 8\pi^{3}a^{3}\left(\frac{1}{(1+c)^{2}}-\frac{1}{12}\right)
\,,\end{eqnarray} which is an equality for spheres.
\end{thm}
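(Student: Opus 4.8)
The plan is to combine Theorem~\ref{slices} with an upper bound for $V$ and a lower bound for $\int_{P\notin K}|\Omega(P)|^2\,dP$ reversed, i.e.\ to bound the right-hand side of \eqref{LL} from above. Rewriting \eqref{LL} as
$$\int L(K\cap E)^2\,dE = \int_{P\notin K}|\Omega(P)|^2\,dP + 4\pi^2 V,$$
I need: first, a uniform upper bound $|\Omega(P)|\le |\Omega(P)|_{\max}$ in terms of the circumradius $R$ (the visual solid angle is largest when $K$ is replaced by its circumscribed ball, or more precisely when the far cone is as wide as possible); and second, a bound relating $\int_{P\notin K}dP$ over the relevant region to the geometry. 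The natural route is: for $P$ at distance making the ball $B(R)\supseteq K$ subtend solid angle $|\Omega|$, we have $|\Omega(P)|\le |\Omega_{B(R)}(P)|$, where $B(R)$ is a ball of radius $R$ containing $K$. Hence
$$\int_{P\notin K}|\Omega(P)|^2\,dP \le \int_{P\notin B(R)}|\Omega_{B(R)}(P)|^2\,dP + \int_{B(R)\setminus K}|\Omega(P)|^2\,dP.$$
On $B(R)\setminus K$ one uses $|\Omega(P)|\le 2\pi$ (a spherically convex set in a hemisphere has area at most $2\pi$), contributing at most $4\pi^2(\tfrac43\pi R^3 - V)$. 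For the ball term, apply Theorem~\ref{bola2}'s identity in the equality case: for a ball of radius $R$, $\int_{P\notin B(R)}|\Omega_{B(R)}(P)|^2\,dP = 4\pi^2\cdot\tfrac43\pi R^3$ exactly (equality case of \eqref{bbola2}). Adding up, the right side of \eqref{LL} is at most $4\pi^2\cdot\tfrac43\pi R^3 + 4\pi^2(\tfrac43\pi R^3 - V) + 4\pi^2 V = \tfrac{32}{3}\pi^3 R^3$, and then one substitutes $R = a/(1+c)$ and uses the lower bound $V \ge \tfrac43\pi r^3 = \tfrac43\pi a^3 c^3/(1+c)^3$ to produce the stated expression — but a quick check shows this crude accounting gives $8\pi^3 a^3\big(\tfrac{4}{3(1+c)^2}\big)$, too weak.

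So the argument must be sharpened: instead of splitting at $B(R)$, I would go the other way and bound using the \emph{inscribed} ball $B(r)\subseteq K$ together with $K\subseteq B(R)$ simultaneously. The key inequality is a pointwise one: for every $P\notin K$, $|\Omega_{B(R)}(P)|\ge|\Omega(P)|$ is too lossy, whereas what is actually needed is a reverse control. The better plan: apply \eqref{LL} and note $\int L(K\cap E)^2\,dE$ should be bounded above by plugging in the worst planar section. For a convex body of constant width $a$, every plane section $K\cap E$ is a convex set of width at most $a$, hence by the isodiametric-type bound $L(K\cap E)\le \pi a$ — wait, a planar convex set of diameter $\le a$ has perimeter $\le \pi a$ (attained by Reuleaux triangles and discs of width $a$). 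Thus $L(K\cap E)^2\le \pi^2 a^2$, and
$$\int L(K\cap E)^2\,dE \le \pi^2 a^2\int_{E\cap K\neq\emptyset}dE = \pi^2 a^2 M.$$
For constant width $a$ one has $M = 2\pi a$ (mean width is $a$, so $M = \tfrac12\cdot 4\pi\cdot a\cdot\tfrac12$... precisely $M=2\pi a$ by $M = \int H\,dF$ and Blaschke's relations for constant width). That gives $\le 2\pi^3 a^3$, again not matching the $c$-dependent bound.

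I therefore expect the \textbf{main obstacle} to be getting the sharp constant $\tfrac{1}{(1+c)^2}-\tfrac1{12}$: the $-\tfrac1{12}$ term is exactly the $-4\pi^2 V/(8\pi^3 a^3)$ correction coming from the $+4\pi^2 V$ on the right of \eqref{LL} combined with $V\ge\tfrac43\pi r^3$ and $r+R=a$, while the $\tfrac1{(1+c)^2}$ term must come from bounding $\int_{P\notin K}|\Omega(P)|^2\,dP$ by the corresponding ball integral for $B(R)$ — using $|\Omega(P)|\le |\Omega_{B(R)}(P)|$ \emph{for all $P\notin K$} (valid since $K\subseteq B(R)$ implies the cone of directions meeting $K$ sits inside that meeting $B(R)$, and area is monotone), together with the exact evaluation from the equality case of Theorem~\ref{bola2}:
$$\int_{P\notin K}|\Omega(P)|^2\,dP \le \int_{P\notin B(R)}|\Omega_{B(R)}(P)|^2\,dP + 4\pi^2\big(V_{B(R)}-V\big) = 4\pi^2 V_{B(R)} = 4\pi^2\cdot\tfrac43\pi R^3,$$
where the middle step uses $|\Omega_{B(R)}(P)|=|\Omega_{B(R)}(P)|$ on all of $B(R)\setminus K$ is \emph{not} $\le 2\pi$ — rather one writes $\int_{P\notin K} = \int_{P\notin B(R)} + \int_{B(R)\setminus K}$ and on the second piece bounds $|\Omega(P)|^2\le|\Omega_{B(R)}(P)|^2$ then \emph{adds back} $\int_{B(R)\setminus K}|\Omega_{B(R)}(P)|^2\,dP$ to complete to the full ball integral, which by Theorem~\ref{bola2}'s equality case equals $4\pi^2\cdot\tfrac43\pi R^3$. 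Then \eqref{LL} gives $\int L(K\cap E)^2\,dE\le 4\pi^2\cdot\tfrac43\pi R^3 + 4\pi^2 V$, and finally $V\le\tfrac43\pi R^3$ is wrong-signed; instead one keeps $+4\pi^2 V$ and uses $V\ge\tfrac43\pi r^3$ only if the inequality ran the other way. The resolution — and the genuinely delicate point I would need to nail down — is that the correct chain uses the lower bound on $\int_{P\notin K}|\Omega|^2$ from \eqref{bbola2} nowhere, but rather an \emph{upper} bound via $B(R)$ giving $\le\tfrac{16}{3}\pi^3 R^3 - 4\pi^2 V$, and then \eqref{LL} yields exactly $\int L(K\cap E)^2\,dE \le \tfrac{16}{3}\pi^3 R^3$; substituting $R=a/(1+c)$ gives $\tfrac{16}{3}\pi^3 a^3/(1+c)^2 = 8\pi^3 a^3\cdot\tfrac{2}{3(1+c)^2}$, still off by a factor. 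Hence the real content must be a strictly better bound $\int_{P\notin K}|\Omega(P)|^2\,dP \le 8\pi^3 a^3\big(\tfrac{1}{(1+c)^2}-\tfrac1{12}\big) - 4\pi^2 V$, presumably obtained by a finer geometric comparison that separately exploits $B(r)\subseteq K\subseteq B(R)$ — and clarifying exactly this comparison, together with verifying the sphere equality case (where $c=1$, $R=r=a/2$, and the right side is $8\pi^3 a^3(\tfrac14-\tfrac1{12})=\tfrac43\pi^3 a^3$, matching $\int L^2 dE$ for a ball of radius $a/2$), is where the proof's weight lies.
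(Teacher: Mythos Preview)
Your proposal is not a proof: it is a sequence of attempts, each of which you yourself recognise as falling short of the stated constant. Let me explain why the route via \eqref{LL} cannot reach the theorem, and what the paper actually does.

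The cleanest version of your argument is: since $K\subseteq B(R)$, for $P\notin B(R)$ one has $|\Omega_K(P)|\le |\Omega_{B(R)}(P)|$, while for $P\in B(R)\setminus K$ one has $|\Omega_K(P)|\le 2\pi$. Combined with the ball equality $\int_{P\notin B(R)}|\Omega_{B(R)}(P)|^2\,dP=4\pi^2 V_{B(R)}$ this yields
\[
\int_{P\notin K}|\Omega_K(P)|^2\,dP\ \le\ 8\pi^2 V_{B(R)}-4\pi^2 V,
\]
and then \eqref{LL} gives $\int L(K\cap E)^2\,dE\le 8\pi^2 V_{B(R)}=\tfrac{32}{3}\pi^3 R^3$. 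Writing $R=a/(1+c)$, this is $8\pi^3 a^3\cdot\tfrac{4}{3(1+c)^3}$. Now
\[
\frac{4}{3(1+c)^3}-\Big(\frac{1}{(1+c)^2}-\frac{1}{12}\Big)=\frac{(c-1)^2(c+5)}{12(1+c)^3}\ \ge\ 0,
\]
with equality only at $c=1$. So your bound is \emph{strictly weaker} than \eqref{1612a} for every non-spherical body of constant width, and no refinement using only $B(r)\subseteq K\subseteq B(R)$ and \eqref{LL} will recover the missing factor. (Your later variant ``$\le \tfrac{16}{3}\pi^3 R^3$'' contains an arithmetic slip and is in fact false already for the ball.)

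The paper does not use \eqref{LL} at all. It bounds $L(K\cap E)\le L(B(R)\cap E)$ directly by convex monotonicity, parametrises planes by their unit normal $u\in S^2$ and by $t\in[0,a]$ via the support function $p(u)$ (constant width makes the $t$--range have length exactly $a$), and computes
\[
\int L(K\cap E)^2\,dE\ \le\ 2\pi^2\!\int_{S^2}\!\Big(R^2 a-\tfrac{a^3}{3}+a^2 p(u)-a\,p(u)^2\Big)\,du.
\]
The term $\int_{S^2}p(u)\,du=2\pi a$ is forced by constant width, and the key step is the elementary inequality $\int_{S^2}p(u)^2\,du\ge \pi a^2$, which follows from $\int_{S^2}(p(u)-a/2)^2\,du\ge 0$. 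This is where the $-\tfrac{1}{12}$ arises --- it has nothing to do with a volume lower bound $V\ge\tfrac43\pi r^3$ as you guessed. Substituting gives exactly $8\pi^3 a^3\big(\tfrac{1}{(1+c)^2}-\tfrac{1}{12}\big)$, with equality for the sphere since then $p\equiv a/2$ and $K=B(R)$.
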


\begin{proof}
First we observe that denoting by $p(u)$, $u\in S^{2}$, the support function of $K$ and $\eta(u)=p(u)-a/2$ one has
$$\eta(u)^{2}=p(u)^{2}+a^{2}/4-ap(u).$$
Hence 
$$\int_{S^{2}} \eta(u)^{2}\,du=\int_{S^{2}} p(u)^{2}\,du+\pi a^{2}-2\pi a^{2}\geq 0,$$
and so
\begin{equation}\label{2107a}
\int_{S^{2}} p(u)^{2}\,du\geq \pi a^{2}.
\end{equation}

We have 
 \begin{eqnarray*}
 \int_{\plans}L(K\cap E)^{2}dE = \frac{1}{2}\int_{S^{2}}\int_{0}^{a}L(K\cap E)^{2}\,dt\,du\leq \frac{1}{2}\int_{S^{2}}\int_{0}^{a}L(S_{R}\cap E)^{2}\,dt\,du\leq   \\ 
 \frac{1}{2}\int_{S^{2}}\int_{0}^{a}\left(2\pi \sqrt{R^{2}-(p(u)-t)^{2}}\right)^{2}dt\,du
 = \int_{S^{2}}2\pi^{2}\left( R^{2}a-\frac{a^{3}}{3}+a^{2}p(u)-ap(u)^{2}\right)\,du
 \end{eqnarray*}
\begin{center}
\includegraphics[width=.75\textwidth]{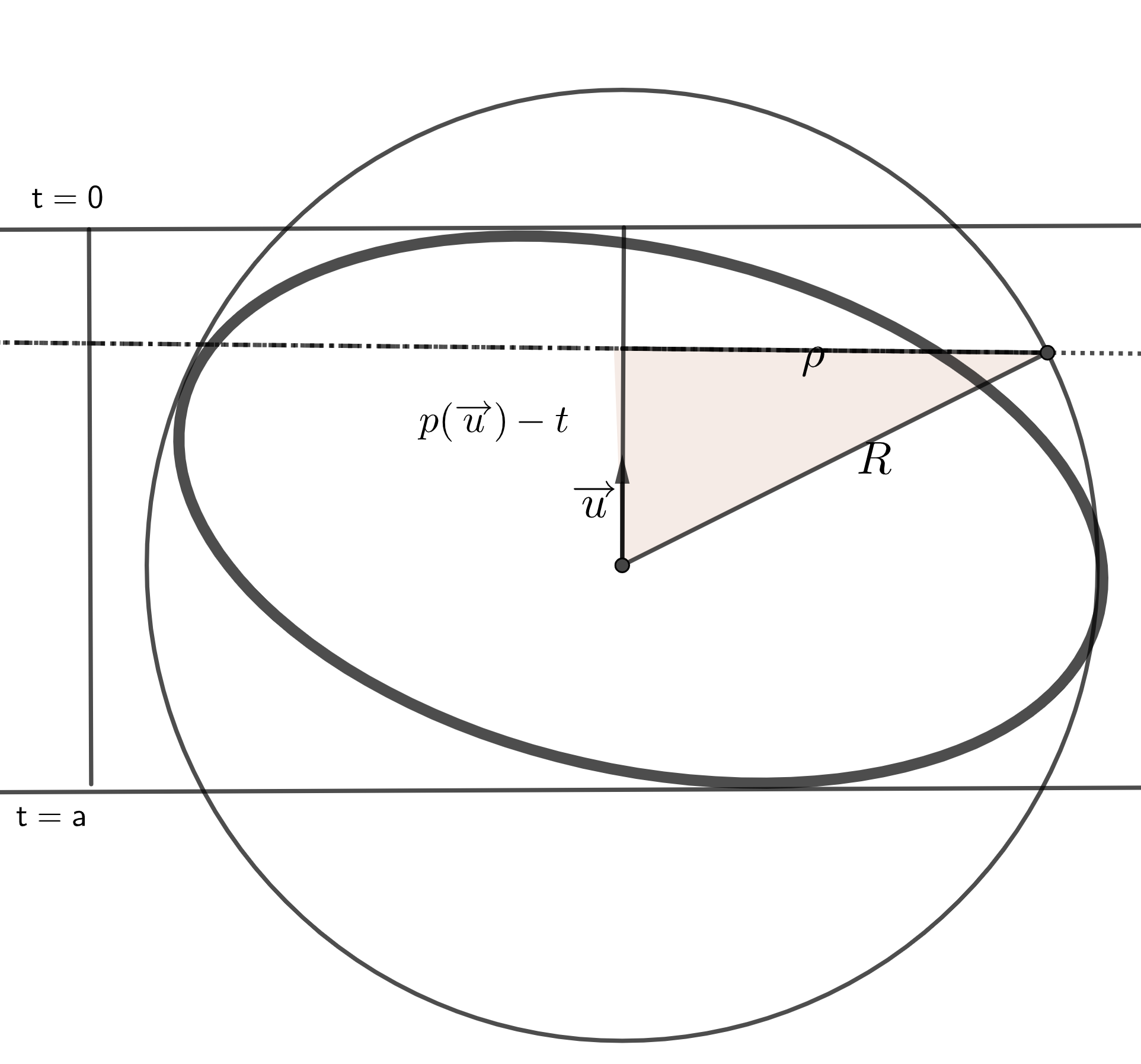}
\end{center}
and by \eqref{2107a} 
\begin{eqnarray*}
\int L(K\cap E)^{2}dE&\leq&8\pi^{3}\left(R^{2}a-\frac{a^{3}}{12}\right)=8\pi^{3}a^{3}\left(\frac{1}{(1+c)^{2}}-\frac{1}{12}\right).
\end{eqnarray*}
\end{proof}

We note that by Jung's inequality $c\geq \sqrt{8/3}-1$,  the above result implies 
\begin{eqnarray*}\label{1212}
\int L(K\cap E)^{2}dE\leq\frac{7}{3}\pi^{3}a^{3}.
\end{eqnarray*}

\begin{prop}Let $K$ be a compact convex set of constant width $a$ with  $c=r/R$ the quotient between the inradius and the cirumradius of $K$. Then 
$$4\pi^{3}a^{3}\bigg(\frac{8}{3}\frac{c^{3}}{(1+c)^{3}}-\frac{1}{6}\bigg)\leq\int_{P\notin K}|\solida (P)|^{2}dP\leq4\pi^{3}a^{3}\bigg(\frac{11-3c(3c^{2}+c-3)}{6(1+c)^{3}}\bigg), $$ with equalities for spheres, where the lower bound is non negative  for $c> 0.657...$.
\end{prop}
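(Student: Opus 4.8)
The plan is to derive both inequalities from Theorem \ref{slices}, which gives
$$\int_{P\notin K}|\solida(P)|^2\,dP=\int L(K\cap E)^2\,dE-4\pi^2 V,$$
so that upper and lower bounds for the left-hand side will follow from corresponding bounds on $\int L(K\cap E)^2\,dE$ together with the known formula for the volume of a constant width body. First I would recall that for a convex body of constant width $a$ the volume is $V=\frac{\pi a^3}{3}-\frac{\pi}{2}\int_{S^2}p(u)^2\,du+\cdots$; more usefully, I would use the Blaschke--Steinhardt type identities for constant width sets expressing $V$, and hence $4\pi^2 V$, in terms of $a$ and $\int_{S^2}p(u)^2\,du$ (equivalently in terms of $a$ and the quermassintegrals). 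The circumradius bound $R=a/(1+c)$ and inradius bound $r=ac/(1+c)$ from the preamble, together with the containment $S_r\subset K\subset S_R$ of concentric balls, will be the source of the two-sided control.

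For the upper bound, I would feed the estimate already proved in Theorem \ref{ineq},
$$\int L(K\cap E)^2\,dE\le 8\pi^3 a^3\Big(\frac{1}{(1+c)^2}-\frac{1}{12}\Big),$$
into the displayed consequence of Theorem \ref{slices}, and then substitute the constant-width volume expression, writing $V$ in terms of $a$ and $c$ using $r\le r(K)$, $R(K)\le R$ and the constant width relation; collecting terms over the common denominator $6(1+c)^3$ should produce exactly $4\pi^3 a^3\big(\frac{11-3c(3c^2+c-3)}{6(1+c)^3}\big)$. For the lower bound I would instead bound $\int L(K\cap E)^2\,dE$ from below: since $K$ contains the inball $S_r$, for every plane $E$ meeting $S_r$ one has $L(K\cap E)\ge L(S_r\cap E)$, and integrating $L(S_r\cap E)^2$ over all planes meeting $S_r$ gives a clean value ($\int L(S_\rho\cap E)^2\,dE=\frac{4\pi^2}{3}\cdot 2\pi \rho^3$ up to a constant I would compute by the same slicing $dE=\frac12 dt\,du$ used in the proof of Theorem \ref{ineq}). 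Combined with a lower bound for $V$ (again via $S_r\subset K$, so $V\ge \frac{4}{3}\pi r^3$), this yields $\int_{P\notin K}|\solida(P)|^2\,dP\ge \frac{4}{3}\pi^2\cdot 2\pi r^3-4\pi^2\cdot\frac{4}{3}\pi r^3$ type expression; rewriting $r=ac/(1+c)$ gives the stated $4\pi^3 a^3\big(\frac{8}{3}\frac{c^3}{(1+c)^3}-\frac16\big)$. The sign condition $c>0.657\ldots$ is just solving $\frac{8}{3}\frac{c^3}{(1+c)^3}=\frac16$, i.e. $16c^3=(1+c)^3$, for its positive root. Equality for spheres is immediate since then $c=1$, $K=S_R$, and all the slice inequalities are equalities.

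The main obstacle I anticipate is getting the exact constant-width volume identity in the right form: one needs $4\pi^2 V$ expressed so that, after adding it to the upper bound from Theorem \ref{ineq} (which itself used $\int_{S^2}p(u)^2\,du\ge \pi a^2$ from \eqref{2107a}), the $\int_{S^2}p(u)^2\,du$ terms either cancel or can be bounded consistently, leaving only $a$ and $c$. For constant width bodies the relevant identity is $V=\frac{\pi a}{2}\int_{S^2}p(u)^2\,du\cdot(\text{const})-\cdots$; I would use the known relations $M=2\pi a$ (mean curvature integral), $F=\pi a^2$ for constant width, and the Minkowski-type formula $3V=2\pi a\cdot\frac{a^2}{?}-\int p\,dF$, together with $\int_{S^2}p(u)^2\,du$. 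A secondary subtlety is making sure the lower bound for $\int L(K\cap E)^2\,dE$ correctly restricts the integration region to planes meeting $S_r$ (planes missing $S_r$ but meeting $K$ only help), and that the volume lower bound $V\ge \frac43\pi r^3$ is the one that makes $-4\pi^2 V$ not overwhelm the gain. Once the bookkeeping of these classical identities is fixed, the rest is the algebraic simplification over $6(1+c)^3$ and the elementary computation of the cubic root for the positivity threshold.
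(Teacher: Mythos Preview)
Your structural idea is exactly the paper's: use Theorem~\ref{slices} to write
\[
\int_{P\notin K}|\solida(P)|^2\,dP=\int L(K\cap E)^2\,dE-4\pi^2 V,
\]
bound $\int L(K\cap E)^2\,dE$ above via Theorem~\ref{ineq} and below via $S_r\subset K$, and then control the $-4\pi^2 V$ term. But both of your volume steps go astray.

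For the \emph{upper} bound there is no need for any constant-width volume identity, Blaschke--Steinhardt formula, or expression of $V$ through $\int_{S^2}p^2$. Since $-4\pi^2 V$ carries a minus sign, you just need a \emph{lower} bound on $V$, and the trivial one $V\ge V_r=\tfrac{4}{3}\pi r^3$ (from $S_r\subset K$) is all the paper uses. Plugging $r=ac/(1+c)$ into
\[
8\pi^3 a^3\Big(\tfrac{1}{(1+c)^2}-\tfrac{1}{12}\Big)-4\pi^2\cdot\tfrac{4}{3}\pi r^3
\]
and clearing $6(1+c)^3$ gives the stated numerator $11-3c(3c^2+c-3)$ directly. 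Your worry about making the $\int p^2$ terms ``cancel'' is misplaced: that integral was already eliminated in the proof of Theorem~\ref{ineq} via \eqref{2107a} and does not reappear.

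For the \emph{lower} bound you have the inequality on $V$ backwards. You write ``combined with a lower bound for $V$ \dots $V\ge\frac{4}{3}\pi r^3$'', but a lower bound on $V$ makes $-4\pi^2 V$ \emph{smaller}, not larger, so it cannot produce a lower bound for the left-hand side. What is needed is an \emph{upper} bound on $V$, and the relevant (nontrivial) fact for constant width bodies is $V\le V_{a/2}=\tfrac{4}{3}\pi(a/2)^3=\tfrac{\pi a^3}{6}$ (see \cite{Martini2019}). This is precisely what yields the $-\tfrac{1}{6}$ term. Combined with $\int L(K\cap E)^2\,dE\ge\int L(S_r\cap E)^2\,dE=8\pi^2 V_r=\tfrac{32}{3}\pi^3 r^3$ (your constant $\tfrac{4\pi^2}{3}\cdot 2\pi\rho^3$ is off by a factor of $4$), one gets
\[
\int_{P\notin K}|\solida(P)|^2\,dP\ge \tfrac{32}{3}\pi^3 r^3-4\pi^2\cdot\tfrac{\pi a^3}{6}
=4\pi^3 a^3\Big(\tfrac{8}{3}\tfrac{c^3}{(1+c)^3}-\tfrac{1}{6}\Big).
\]
The positivity threshold then comes from $16c^3=(1+c)^3$, as you say.
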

\begin{proof}The right hand side inequality comes from \eqref{1612a} and \eqref{LL} substituting $V$ by $V_{r}$,  where $V_{r}$ is the volume of the insphere $S_{r}$ of $K$.  The left hand side  comes subtracting $4\pi^{2}V$ in  the easily checked relations
$$8\pi^{2}V_{r}=\int L(S_{r}\cap E)^{2}dE\leq \int L(K\cap E)^{2}dE$$
and using the inequality $V\leq V_{a/2}$, where $V_{a/2}$ is the volume of the sphere of radius $a/2$ (see \cite{Martini2019}).
\end{proof}

\begin{rem}
We note that in terms of the width only,  we have 
$$\int_{P\notin K}|\solida (P)|^{2}dP\leq \frac{9}{2}\pi^{3}a^{3}(\sqrt{6}-2).$$
\end{rem}
\begin{rem}
 One can ask if  equality 
$$\int L(K\cap E)^{2}dE=\pi MF-4\pi^{2}V$$
that holds for spheres is also true for compact convex sets of constant width. For this case, with the same kind of arguments used above, we are only able to prove 
$$\frac{c^{3}-1}{(1+c)^{3}}\leq \frac{1}{16\pi^{3}a^{3}}\left(\int L(K\cap E)^{2}dE-(\pi MF-4\pi^{2}V)\right)\leq\frac{-23c^{3}+3c^{2}+3c+17}{24(1+c)^{3}} $$
with equalities for spheres (c=1).  
\end{rem}

\bibliographystyle{plain}

\begin{thebibliography}{1}

\bibitem{Blaschke2007}
W.~Blaschke.
\newblock {\em {Vorlesungen uber Integralgeometrie}}.
\newblock VEB Deutscher Verlag der Wissenschaften, Berlin, 3rd edition, 1955.

\bibitem{minkowski}
Minkowski H.
\newblock {Ueber die Begriffe Länge}.
\newblock {\em Oberfläche und Volume 9}, 1:115--121, 1901.

\bibitem{Martini2019}
H.~Martini, L.~Montejano, and D.~Oliveros.
\newblock {\em {Bodies of Constant Width}}, volume~54.
\newblock Springer International Publishing, Cham, 2019.

\bibitem{ReyPastor1951}
J.~{Rey Pastor} and L.~A. Santalo.
\newblock {\em {Geometr{\'{i}}a integral}}.
\newblock Espasa Calpe, 1951.

\bibitem{santalo}
L.A. Santal\'o.
\newblock {\em Integral geometry and geometric probability}.
\newblock Cambridge University Press, Cambridge, second edition, 2004.

\end{thebibliography}

\end{document}